\documentclass[12pt]{article}

\usepackage{amsfonts}
\usepackage{amsmath, amscd}
\usepackage{amssymb}
\usepackage[marginpar]{todo}
\usepackage{paralist} 


\newtheorem{theorem}{Theorem}[section]
\newtheorem{example}{Example}[section]

\newtheorem{remark}{Remark}[section]
\newtheorem{definition}{Definition}[section]

\newtheorem{case}{Case}
\newenvironment{proof}{\textbf{Proof.}}{\qquad $\Box$ \bigskip }

\newcommand{\npmatrix}[1]{\left( \begin{matrix} #1 \end{matrix} \right)} 
\newcommand{\mc}{\mathcal}
\newcommand{\R}{\mathbb{R}}
\newcommand{\C}{\mathbb{C}}
\newcommand{\F}{\mathbb{F}}
\newcommand{\vc}{\mathrm{vec}}
\newcommand{\rk}{\mathrm{rank}}
\newcommand{\pmx}[1]{\ensuremath{\begin{pmatrix}#1\end{pmatrix}}}
\newcommand{\pde}[2]{\frac{ \partial #1}{ \partial #2}}


\title{ Solution Theory for Systems of Bilinear Equations}
\author{Charles R. Johnson$^{1}$, Helena \v Smigoc$^{2}$\thanks{The work of this author was supported by Science Foundation Ireland under Grant 11/RFP.1/MTH/3157} , Dian Yang$^{3}$ \\
{\small $^{1}$Department of Mathematics, The College of William and Mary,}\\ {\small Williamsburg, Virginia, USA}\\
{\small Email:  crjohnso@math.wm.edu} \\
{\small $^{2}$School of Mathematical Sciences,} \\ {\small University College Dublin,} \\ {\small Belfield ,  Dublin 4, Ireland}\\
{\small (Correspondence) Email: Helena.Smigoc@ucd.ie }\\
{\small $^{3}$Department of Mathematics, The College of William and Mary,}\\ {\small Williamsburg, Virginia, USA}\\
{\small Email:  dyang@email.wm.edu}}

\date{}

\begin{document}

\maketitle

\begin{abstract}
Bilinear systems of equations are defined, motivated and analyzed for solvability. Elementary structure is mentioned and it is shown that all solutions may be obtained as rank one completions of a linear matrix polynomial derived from elementary operations. This idea is used to identify bilinear systems that are solvable for all right hand sides and to understand solvability when the number of equations is large or small. 

{\bf AMS classification:} 15A63

\parindent=14 pt

{\bf Keywords:} Bilinear systems, Rank one completion problem, Solution theory
\end{abstract}

\section{Introduction}

A \emph{ bilinear equation } is one in which the variables may be partitioned into two (disjoint) subsets such that the left hand side is linear in each set separately. The right hand side is a given scalar. Thus, a bilinear equation may be written as $$y^TAx=g$$
in which $A$ is a $p$-by-$q$ matrix of given scalars from a field $\F$, $A \in M_{p,\, q}(\F),$ $x$ varies over $\F^q,$ while $y$ varies independently over $\F^p,$ and $g \in \F$ is a scalar. A \emph{system of bilinear equations (BLS)} is then 
 $$y^TA_ix=g_i, \, i=1,2\ldots,m$$
with each $A_i \in M_{p, \, q}(\F),$ with $y \in \F^p$ and $x \in \F^q$ two independent vectors of variables (the same for all the equations), and each $g_i \in \F.$ The BLS is called homogeneous if $g_i=0$, $i=1,2,\ldots, m.$
When the $A_i$'s are linearly independent, $m \leq pq,$ and we define $r=pq-m.$  We will fix and use this notation $(p,q,r,m,x,y)$ throughout. 	

Bilinear systems may arise in many ways, but the first author has been motivated to study them because of their connection with the analysis of whether two patterns $\mathcal{P}$ and $\mathcal{Q}$ commute \cite {MR2202432}. In $\mathcal{P}$ and $\mathcal{Q}$ the positions of the nonzero real entries are known but their numerical values are not. The two patterns are said to (real) commute if there exist real matrices $P$ and $Q$, respectively, of patterns $\mathcal{P}$ and $\mathcal{Q}$, that commute. Of course the equation $\mathcal{PQ-QP}=0$ is a special homogeneous BLS in as many variables as the total number of nonzero entries of $\mathcal{P}$ and $\mathcal{Q}$. Here, a totally nonzero solution is required. 
In a recent paper \cite{bobpalais} it was proposed to use pairs of vectors in $\mathbb{R}^3$ to represent quaternions. For $v, w \in \mathbb{R}^3$, formula $T(v,w)=[v\cdot w, v\times w]$ gives a ring isomorphism between equivalence classes of pairs of vectors in $\mathbb{R}^3$ and quaterions. The determination of each equivalence class involves solving the equation $T(v,w)=d_0\in \R^4$, which is a BLS.

Bilinear systems can be connected to bi-affine and multivariate quadratic systems appearing in cryptography, for which some theory and solution algorithms based on linearization and Gr\"obner bases already exist \cite{Courtois02cryptanalysisof,Xie:2009:QEK:1698100.1698124,Wolf}.

A \emph{solution} to  a BLS is a pair of vectors $x \in \F^q,$ $y \in \F^p$ simultaneously satisfying all $m$ bilinear equations. Of course, if $x,$ $y$ is a solution, so is $t x,$ $\frac{1}{t}y$ for all $0 \neq t \in \F$.  The homogeneous BLS always has the "trivial" solutions $x=0,$ $y$ arbitrary, and $x$ arbitrary, $y=0.$ It is natural to focus upon nontrivial solutions in which both $x$ and $y$ have nonzero entries. Our purpose is to develop solution theory for bilinear systems, both to determine whether a given system has any solutions and how to find solutions (there may be many that are essentially different) when they exist. It is worth noting that this is generally difficult (though we develop theory that helps); there are some important likenesses to the theory of linear systems and also some very significant differences. For example, unlike linear systems, there may be no solution over the field in which data is given, while there is a solution over an extension field (rather more like quadratic systems in this regard). 

 \begin{example}
 Let $A_1=\npmatrix{1 & 0 \\ 0 & 1}$ and $A_2=\npmatrix{0 & 1 \\ -1 & 0}.$ The homogenous bilinear system defined by $A_1$ and $A_2:$
\begin{align*}
x_1y_1+x_2y_2&=0 \\
-x_1y_2+y_1x_2&=0
\end{align*} 
has no nontrivial solutions over $\mathbb{R},$ but $y^T=(i,1),$ $x^T=(i,1)$ is a nontrivial solution over $\mathbb{C}.$
 \end{example}

The solvability of bilinear systems seems not to be so well developed thus far. The set of notes \cite{892611} gives several observations, including one (mentioned in Section \ref{Section2}) that is used for an approximate solution algorithm, and \cite{MR2567143} gives a full solution theory (that includes a calculation mechanism in exact arithmetic via linear systems) for complete ($m=pq$ and linearly independent $A_i$'s) bilinear systems that will be a special case of our main result in Section \ref{Section3}.

In the next section, we give some background on BLS's that will be useful throughout and then discuss the simplifying transformations that preserve solutions or solvability of a given BLS. In Section \ref{Section3} we give a general result that transforms both our problems to that of finding rank 1 values of a given linear matrix polynomial (or rank 1 matrices in an affine matrix subspace). We refer to these as  "completions" of the linear matrix polynomial, an interesting question on its own. A brief discussion is given in Section \ref{Section4}. In Section \ref{Section5}, the $A_i,$ $i=1,2,\ldots, m,$ for which the BLS is solvable for all right hand sides (i.e. always solvable) are discussed from several perspectives. In the last section, we discuss solvability of a BLS in other situations. Informative examples are given throughout.

\section{Observations and Solution Preserving Transformations}\label{Section2}

We start with some initial observations that have some overlap with \cite{892611}. The left hand side matrices $A_1,\, A_2,\ldots, A_m$ that define a BLS may be viewed as a $p$-by-$q$-by-$m$ $3$-dimensional array $\mathbb{A}$  when they are displayed side by side. It may be useful to slice this array in other ways. For example, if the vector $y$ (or $x$) is fixed, the bilinear system becomes a linear system $Yx=g$. Here, $g=(g_1,g_2,\ldots,g_m)^T$ and 
 $$Y=\npmatrix{y^TA_1 \\ y^TA_2 \\ \vdots \\ y^TA_m}=y_1R_1+y_2R_2+\ldots+y_pR_p$$
in which $R_i$ is an $m$-by-$q$ matrix and is the slicing of $\mathbb{A}$ with the $i$-th rows of $A_1,\ldots,A_m.$ Similarly, 
 $$X=\npmatrix{A_1x & A_2x & \ldots & A_m x}=x_1S_1+\ldots x_qS_q$$
 in which $S_j$ is a $p$-by-$m$ matrix and has the $j$-th columns of $A_1,\ldots A_m$ in order. If $(x,y)$ is a solution to a bilinear system with right hand side $g,$ then $x$ will be a solution of the linear system $Yx=g$ ($y$ will be a solution to the linear system $y^TX=g^T$), and conversely. 
 
There are two natural types of transformations on the pairs $(A_i, g_i)$ that preserve solvability. The first preserves solutions and is analogous to elementary operations on linear systems:
\begin{enumerate}[(i)]
\item \label{itemi} The $(A_i, g_i)$ pairs may be permuted.
\item \label{itemii} An $(A_i,g_i)$ pair may be multiplied by a nonzero scalar.
\item \label{itemiii} An $(A_i,g_i)$ pair may be replaced by itself plus a linear combination of the other $(A_j,g_j)$ pairs. 
\end{enumerate}
In addition, all the matrices $A_i$ may be simultaneously replaced by a single nonzero scalar multiple of themselves while keeping the right hand sides $g_i$ unchanged. Using operations (\ref{itemi}), (\ref{itemii}), (\ref{itemiii}) described above, the matrices $A_i$
 may be reduced to a (possibly smaller) linearly independent set of matrices $\hat{A}_1,\dots, \hat{A}_{\hat{m}}$ and, possibly, some $0$ matrices, with appropriate modification of the values $g_i$ to the values $\hat{g}_i$ on the right hand side.  If there is a $0$ matrix with nonzero right hand side $\hat{g}_{i},$ then our original bilinear system had no solutions. Otherwise, we may add to our standard assumptions that $A_1,A_2,\ldots, A_m$ are linearly independent, in which case $m \leq pq.$ In addition, $A_1, A_2,\ldots, A_m$  may be taken to be any basis of the space that they span.  

We note that elementary linear operations  (\ref{itemi}), (\ref{itemii}), (\ref{itemiii}) could be used for an additional purpose: to transform $g^T$ to $(1,0,\ldots, 0),$ in the case when the original system is not homogeneous. This was noted in \cite{892611} to transform any bilinear system to an almost homogeneous one. Then solving the bilinear system is like solving a homogeneous one with one fewer equation so that the solution has nonzero bilinear form on another, independent matrix. 

An additional type of transformation that preserves solvability of a bilinear system is simultaneous equivalence on the matrices $A_i$: 
 $$A_i \rightarrow PA_iQ, \, i=1,2,\ldots,m,$$
in which $P \in M_p(\F)$ and $Q \in M_q(\F)$ are nonsingular. In this case, the right hand side $g$
is not changed, and the new bilinear system is solvable if and only if the original one was. In addition, there is a one--to--one correspondence between solutions of the original bilinear system and of the new bilinear system, given by 
 $$(x,y) \rightarrow (Q^{-1}x,P^{-1}y).$$
 It may happen that an equivalence makes the solvability (or non-solvability) of a bilinear system more transparent. 
 
 The transformations on the $A_i$'s  that we have mentioned (linear operations and simultaneous equivalence) may also be interpreted in terms of the matrices  $R_i$ and $S_i.$ For example, right equivalence of the $A_i$'s  ($A_i \rightarrow PA_i,$ $i=1,\ldots,m$) corresponds to right equivalence on the $S_i$'s, and linear elementary operations on the $A_i$'s correspond to left equivalence on the $R_i$'s. 
 
We note that solvability of a bilinear system can depend significantly on the right hand side data.

\begin{example}\label{Ex21}
 Matrices
 $$A_1=\npmatrix{1 & 0 \\ 0 & 1}, \, A_2=\npmatrix{0 & 0 \\ 1 & 0}, \text{ and }A_3=\npmatrix{0 & 1 \\ 0 & 0}$$ 
define the BLS:
 \begin{align*}
 x_1 y_1+x_2y_2=g_1  \\
 x_1 y_2=g_2 \\
 x_2 y_1=g_3.
 \end{align*} 
 If $g_3=0,$ then 
  $$x=\npmatrix{g_2 \\ g_1}, \, y=\npmatrix{0 \\ 1}$$ is a solution and 
 if $g_2=0,$ then 
    $$x=\npmatrix{g_1 \\ g_3}, \, y=\npmatrix{1 \\ 0}$$ is a solution. 
If $g_3 \neq 0$ and $g_2 \neq 0$ then the BLS above has solutions over $\mathbb{R}$ 
if and only if $g_1^2 -4 g_2g_3 \geq 0.$ One can see this by expressing $x_1$ and $x_2$ from the second and the third equation, respectively, and then the first equation becomes a quadratic equation in $y_1/y_2.$    
 \end{example}

\section{General Theory}\label{Section3}

\begin{definition}
Let $\vc$ be a linear transformation from $M_{p,q}(\R)$ to $\R^{pq}$ that assigns to each $A \in M_{p,q}(\R)$ a vector $\vc(A) \in \R^{pq}$ with $(kp+r)$--th element of $\vc(A)$ equal to $a_{r\, k+1}$, $k=0,1,\ldots,q-1$ and $r \leq p.$ Transformation $\vc$ takes the columns of the matrix $A$ and puts them in a single column, starting with column $1.$

We may also define the inverse transformation $\mathrm{unvec}$ from $\R^{pq}$ to $M_{p,q}(\R).$ 
\end{definition}

Using operation $\vc$ we can transform a bilinear system to a linear system with an additional condition. 
\begin{theorem}
The set of solutions of a BLS
\begin{equation}\label{BLSthm3.1}
 y^T A_i x=g_i, \, i=1,\ldots, m,
\end{equation}
is equal to the set of solutions of the equation: 
\begin{equation*} 
 \mc A^T \vc K=g,
\end{equation*}
in which $K=yx^T$, and  $\mathcal{A}=(\vc A_1,\ldots,\vc A_m)$. 
\end{theorem}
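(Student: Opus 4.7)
The plan is to show that for any single bilinear form, $y^T A x = \vc(A)^T \vc(yx^T)$, and then stack this identity across $i = 1, \ldots, m$ to obtain the matrix equation in the theorem.

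First, I would verify the scalar identity by direct index computation. Writing $K = yx^T$, the $(r, k+1)$-entry of $K$ is $y_r x_{k+1}$, so by the definition of $\vc$ the $(kp+r)$-th entry of $\vc K$ equals $y_r x_{k+1}$. Likewise the $(kp+r)$-th entry of $\vc A$ is $a_{r,k+1}$. Hence
\begin{equation*}
\vc(A)^T \vc(K) = \sum_{k=0}^{q-1} \sum_{r=1}^{p} a_{r,k+1}\, y_r\, x_{k+1} = \sum_{i,j} y_i\, a_{ij}\, x_j = y^T A x.
\end{equation*}
(Equivalently, one can invoke the standard identity $\vc(B)^T \vc(C) = \mathrm{tr}(B^T C)$ together with $y^T A x = \mathrm{tr}(A\, xy^T) = \mathrm{tr}(A^T K)^{T} $, but the direct calculation is cleanest.)

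Next I would apply this identity with $A = A_i$ for each $i = 1, \ldots, m$ simultaneously. Each bilinear equation $y^T A_i x = g_i$ becomes $\vc(A_i)^T \vc(K) = g_i$. Stacking these $m$ scalar equations vertically, and noting that $\mathcal{A}^T = (\vc A_1, \ldots, \vc A_m)^T$ is precisely the matrix whose $i$-th row is $\vc(A_i)^T$, yields the single matrix equation $\mathcal{A}^T \vc K = g$.

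Finally, I would spell out the correspondence of solution sets: given any pair $(x, y)$ satisfying the BLS, the matrix $K = yx^T$ satisfies $\mathcal{A}^T \vc K = g$ by the computation above; conversely, any $K$ of the form $yx^T$ satisfying $\mathcal{A}^T \vc K = g$ unravels to a pair $(x, y)$ solving the BLS. The only subtlety, and the point I expect to be emphasized, is that the equivalence is really between BLS solutions $(x,y)$ and rank-one solutions $K$ of the linear system — the statement carries the implicit constraint $K = yx^T$. This is the conceptual bridge to the next section's rank-one completion viewpoint, so I would be careful to phrase the result as an equality of solution sets under this identification rather than as an unrestricted linear solvability result.
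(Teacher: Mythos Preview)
Your proposal is correct and follows essentially the same route as the paper: a direct coordinate computation establishing $y^TAx=(\vc A)^T\vc(yx^T)$, stacking over $i=1,\ldots,m$ to obtain $\mathcal{A}^T\vc K=g$, and the closing remark that the correspondence holds only under the rank-one constraint $K=yx^T$. Your write-up is in fact a bit more explicit about the indexing via the $(kp+r)$ convention in the definition of $\vc$, which is a welcome clarification.
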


\begin{proof}
Let us use standard notation $x_i$ for the $i$--th coordinate of the vector $x$ and $A_{i\, j}$ for the $(i,j)$--th element of the matrix $A.$ 
If we write a bilinear equation $y^TAx=g$ in coordinates we get:
\begin{equation}\label{GT1} 
\sum_{j=1}^p \sum_{k=1}^q A_{j \, k} y_jx_k=g.
\end{equation} 
If we define $K=yx^T,$ then we can write each equation $y^TA_ix=g$ in the following way: 
\begin{equation}\label{GT2} 
 (\vc A_i)^T \vc K=g.
\end{equation} 
Notice that we obtained a set of linear equations in variables $y_jx_k,$ $j=1,\ldots,p,$ $k=1,\ldots,q.$
Let $\mathcal{A}=(\vc A_1,\ldots,\vc A_m)$, and we can write the system as
\begin{equation}\label{GT3} 
 \mc A^T \vc K=g,
\end{equation}
 with $g=(g_1,\ldots,g_m)^T$. 
 
 Notice by the definition $K=yx^T$, $K$ can only have rank one or zero, hence not every solution to the linear system (\ref{GT3}) will give a solution to the original BLS.
\end{proof}

From our basic assumption on bilinear systems that the matrices $A_i$ are linearly independent, it follows that the rows of $\mc A^T$ are linearly independent,
As we know from the theory of linear systems every solution of a linear system can be written in the following way   
$$v_0+z_1 v_1+z_2 v_2+\ldots+z_r v_r,$$ 
in which $v_0$ is a solution to the system, and $v_1, \,v_2, \ldots, v_r$ form a basis for the solution space of the homogenous system $\mc A^T \vc K=0$ and $r=pq-m.$  
Let $K_s=\mathrm{unvec}( v_s)$ for $s=0,1,\ldots, r,$ and let $z=(z_1,\ldots, z_r).$ Then we define 
\begin{equation}\label{GT5}
K(z)=K_0+z_1K_1+z_2 K_2 +\ldots +z_r K_r.
\end{equation}
A nonzero solution to the linear system (\ref{GT3}) will give a solution to the bilinear system (\ref{BLSthm3.1}) if and only if the matrix $K(z)$ has rank one for some choice of $z.$
  
This method reduces a system of bilinear equations to a system of linear equations. Since the solvability of the system of linear equations is well understood, this method seems to simplify the problem. However, after solving a linear system we need to decide if an affine space (\ref{GT5}) contains a rank one matrix, which is in general a difficult problem.

When $\mc A^T$ has $pq$ rows it is invertible. This occurs when the system of bilinear equations has $pq$ equations. In this case, we have only one solution to the linear system, and we only need to check if the obtained solution gives a rank one matrix $K.$  Such bilinear systems are called  \emph{complete bilinear systems} and they have been treated in detail in \cite{MR2567143}. We will call bilinear systems of equations with fewer than $pq$ equations \emph{incomplete systems}.

An incomplete bilinear system can be completed to a complete bilinear system, by adding additional equations to the system. For the right hand sides of the added equations we put free parameters $z_i,$ $i=1,2,\ldots pg-m.$ In terms of the associated linear system this means
augmenting the $pq$-by-$m$ matrix $\mc A$ with linearly independent columns to a $pq$-by-$ pq$ invertible matrix $\mc B$ by adding extra columns to the right hand side of $\mc A,$ and extending the right hand side $g$ to $\npmatrix{g \\ z}.$ The solution to the linear system is then
\begin{equation}\label{KBgz}
\vc K=(\mc B^T)^{-1}\npmatrix{g \\ z}.
\end{equation}
As before, we need to find parameters $z_i$ that will give us a rank one matrix $K.$ Notice if we expand formula (\ref{KBgz}), we get a linear matrix polynomial in $z$:
$$K'(z)=K'_0+z_1 K'_1+\ldots+z_r K'_r.$$
Since the set of solutions of the incomplete system is just the union of the solution sets of all complete systems indexed by $z$, it suffice to find all rank one $K'(z)$'s. Since the $\vc K'(z)$ and $\vc K(z)$ solve the same linear system of equations, for each $K'(z)$ associated with a particular completion, we can always choose a basis $v_1,\ldots,v_r$ for the solution space of the homogenous system $\mc A^T \vc K=0$  such that $K'(z)=K(z)$. Therefore, the formula above is a equivalent to formula (\ref{GT5}). Formula (\ref{KBgz}) thus also serves as an alternate way to calculate the coefficients $K_i$ in formula (\ref{GT5}).

\section{Rank one Completions}\label{Section4}

In the previous section we have seen that the real difficulty in solving a bilinear system lies in finding a rank one matrix in an affine space defined by the equation:
 $$K(z)=K_0+z_1 K_1+\ldots+z_r K_r,$$
in which the matrices $K_i$ are obtained by solving a linear system of equations. As is well known, a nonzero matrix is rank one if and only if all its $2$-by-$2$ minors are equal to zero. The $2$-by-$2$ minors of $K(z)$ are quadratic functions in the $z_i$'s, hence solving a bilinear system comes down to solving a system of $\binom{p}{2}\binom{q}{2}$ quadratic equations in $pq-m$ variables.

\begin{example}\label{EX41}
Let us look at the bilinear system of equations defined by matrices $$A_1=\npmatrix{0 & 1& 0 \\ 0 & 0 & 1},\text{ }
 A_2=\npmatrix{-1 & 0& -1 \\ 0 & 0 & 0 } \text{ and }
 A_3=\npmatrix{0 & 1& 0 \\ -1 & 0 & 0 }$$
for an arbitrary right hand side $g=(g_1,g_2,g_3)^T.$ The corresponding linear equation is 
 $$\mc A^T \vc K=g,$$
with
 $$\mc A^T=\npmatrix{0 & 0& 1 & 0 & 0 & 1 \\
                                    -1 & 0& 0 & 0 & -1 & 0 \\
                                    0 & -1& 1 & 0 & 0 & 0}.$$
and 
 $$\vc K=\npmatrix{y_1x_1 & y_2x_1 & y_1x_2 & y_2 x_2 & y_1 x_3 & y_2 x_3}^T.$$                                     
Solving the linear system gives us the following solution for $\vc K:$
\begin{align*}
x_1y_1&=z_2 \\
x_1y_2&=-g_3+z_3 \\
x_2y_1&=z_3 \\
x_2y_2&=z_1 \\
x_3y_1&=-g_2-z_2 \\
x_3y_2&=g_1-z_3
\end{align*}
where $z_1,$ $z_2$ and $z_3$ are free parameters. To solve our bilinear system we need to find a rank one completion of 
 $$K=\npmatrix{z_2 & z_3 & -g_2-z_2 \\ 
                          -g_3+z_3 & z_1 & g_1-z_3}$$
for some choice of $z_1,$ $z_2,$ $z_3.$ Matrix $K$ will be rank one if and only if all its $2$-by-$2$ minors are equal to $0.$ This gives us a system of three quadratic equations:
\begin{align*}
z_1 z_2-z_3^2+z_3g_3&=0 \\
z_1z_2-z_3^2+z_1g_2+z_3g_1&=0 \\
z_2(g_1-g_3)+z_3g_2-g_2g_3&=0.
\end{align*}
If $g_1 \neq g_3$ then $K$ will be rank one, for example, for $z_1=0,$ $z_3=0,$ $z_2=\frac{g_2g_3}{g_1-g_3}.$ If $g_1=g_3,$ then $K$ will be rank one for $z_3=g_1,$ $z_1=0$ and arbitrary $z_2.$

This shows that bilinear system of equations defined by matrices $A_1,$ $A_2,$ $A_3$ is solvable for any right hand side $g.$ 
\end{example}

\begin{remark}
It is necessary to check all 2-by-2 minors to characterize all rank 1 points of $K(z)$.
Consider the 2-by-3 example:
\[
K(z)=\pmx{1&z&-z\\z&z&1}.
\]
Suppose the minor that is not contiguous is not checked.
The two checked minors are $z-z^2=0$ and $z+z^2=0$, whose common solution is $z=0$. Notice that the result is not rank one:
\[
\pmx{1&0&0\\0&0&1}.
\]
In fact, this 2-by-3 example can be extended to a matrix function of any size by adding zeros in all additional entries, creating counter--examples for $K(z)$ of any size.
\end{remark}

\begin{remark}[Special case when $r=1$]
In case $r=1,$ only one $z$ will appear in the $K$ matrix, which will be of the form $K(z)=G+zH.$
In this event, necessary for solvability of the bilinear systems (i.e. rank of $K(z)$ is equal to one) is that $|\mathrm{rank}G- \mathrm{rank}H|\leq 1.$ Moreover, the existence of a rank $1$ matrix $K(z)$ may easily be assessed via the vanishing $2$-by-$2$ minors approach. Minors in which no $z$ appears must already be $0$ and those in which $z$ does appear must all have a common root. Since these roots may be calculated exactly via the quadratic formula, this requirement may be assessed precisely in polynomial time. 
 
Even when $r$ is small, $z_j$'s may appear in many positions. If $z_j$'s appear in only a few lines of $K,$ then there is a stringent rank condition on the data that is necessary for solvability. If, for example, $z_j$'s appear in only $s$ rows of $K$ ($s$ columns of $K$), the remaining $p-s$ rows ($q-s$ columns) must already have rank $1$.
\end{remark}

\section{Solvability of Bilinear Systems for all Right Hand Sides}\label{Section5}

Now, we suppose that $A_1,\ldots, A_m$ are fixed (and linearly independent) and ask under what conditions the bilinear system is solvable for all right hand sides $g.$ We refer to this situation, which depends only upon $m$ and $A_1, \ldots, A_m$, as \emph{always solvable}.  As we shall see, this can only happen for certain values of $m$.

It is clear that when $m=1,$ every bilinear system is solvable because $A_1 \neq 0$ due to our linear independence hypothesis. Interestingly this remains true for $m=2$ (and we know already that it is not so for $m=3$, as we saw in Example (\ref{Ex21})) 

\begin{theorem}
Under the linear independence hypothesis, every bilinear system with $m \leq 2$ over any field $\F$ is solvable. 
\end{theorem}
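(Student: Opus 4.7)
The case $m=1$ is immediate: linear independence gives $A_1\neq 0$, so pick $(y_0,x_0)$ with $y_0^T A_1 x_0 = c \neq 0$, and then $((g_1/c)y_0, x_0)$ solves $y^T A_1 x = g_1$ for any $g_1 \in \F$.

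For $m=2$, the plan is to dichotomize on the maximum rank of the $y$-slice $Y$ from Section~\ref{Section2}, viewed as a $2$-by-$q$ matrix depending on $y$. In the generic case some $y_0 \in \F^p$ makes $\rk(Y) = 2$; then fixing $y = y_0$ turns the BLS into the linear system $Yx = g$ in $x$, whose coefficient matrix has full row rank and hence admits a solution $x$ for every $g \in \F^2$, giving the desired pair $(y_0, x)$.

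The delicate case is when $\rk(Y) \leq 1$ for every $y$, i.e., the row vectors $y^T A_1$ and $y^T A_2$ are linearly dependent in $\F^q$ for every $y \in \F^p$. The key structural claim I would establish is that this, together with linear independence of $A_1, A_2$, forces $A_i = v_i u^T$ for a common nonzero $u \in \F^q$ and linearly independent $v_1, v_2 \in \F^p$. The argument: if $\rk(A_1) \geq 2$, choose $y_1, y_2$ with $y_1^T A_1, y_2^T A_1$ independent, use the parallelism hypothesis at each $y_i$ to write $y_i^T A_2 = \alpha_i\, y_i^T A_1$, and apply parallelism at $y_1 + y_2$ to force $\alpha_1 = \alpha_2$; extending this common scalar to all of $\F^p$ (handling $y$ with $y^T A_1 = 0$ by translating by a vector outside the left kernel of $A_1$) yields $A_2 = \alpha A_1$, contradicting linear independence. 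So each $A_i$ has rank exactly $1$, and evaluating parallelism at some $y$ with both $y^T A_1, y^T A_2$ nonzero---such $y$ exists because $\F^p$ is never the union of the two proper left kernels---forces the row spaces of $A_1$ and $A_2$ to coincide, yielding the factorization, with $v_1, v_2$ independent (else $A_1, A_2$ would be).

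Once $A_i = v_i u^T$, the form decouples as $y^T A_i x = (y^T v_i)(u^T x)$. We then pick $y$ solving $y^T v_i = g_i$ (possible since $v_1, v_2$ are independent) and any $x$ with $u^T x = 1$ (possible since $u \neq 0$). The main obstacle is the structural lemma in the degenerate case; once it is in hand, the rest is routine linear algebra, and crucially the proof works over any field $\F$ since the only ``genericity'' used is that no vector space is the union of two proper subspaces.
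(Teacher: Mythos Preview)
Your proof is correct. It shares the core mechanism with the paper's: in the degenerate case where the two relevant vectors are always linearly dependent, one shows the two matrices must be scalar multiples by evaluating at two vectors and at their sum to force the scalars to coincide --- this is exactly the paper's basis argument, transposed to the row side. The difference is in the setup. The paper first reduces to $g=(1,0)$ and then fixes $x$, so its degenerate hypothesis is the asymmetric condition $A_1x\in\mathrm{span}\{A_2x\}$ for all $x$, which yields the contradiction $A_1=\alpha A_2$ directly. You keep $g$ arbitrary and fix $y$, so your degenerate hypothesis is symmetric; it genuinely admits the extra sub-case in which both $A_i$ have rank one with a common row space, which you then dispatch by the explicit factorization $A_i=v_iu^T$ and a direct solve. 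Your route is therefore slightly longer but more constructive (it produces an explicit solution in every branch rather than terminating in a contradiction), and your appeal to the fact that a vector space over any field is never the union of two proper subspaces is precisely the field-independent ingredient needed to locate a $y$ with both $y^TA_i$ nonzero.
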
 

\begin{proof}
Let $A_1$ and $A_2$ be linearly independent matrices. We want to show that 
then the bilinear system  \begin{equation}\label{twoBLS} 
 y^TA_1x=g_1, \, y^TA_2x=g_2
 \end{equation} 
is solvable for all right hand sides $(g_1,g_2).$ We have already noted that a homogeneous system always has a trivial solution $x=0$ or $y=0,$ so we may assume that $g_1$ and $g_2$ are not both equal to zero. Linear independence (dependence) of matrices $A_1$ and $A_2$ is preserved after we apply an invertible linear transformation that takes $g=(g_1,g_2)^T$ to $(1,0)^T,$ so it is sufficient to consider the system 
 \begin{equation}\label{twoBLS1} 
 y^TA_1x=1, \, y^TA_2x=0.
\end{equation}  
Clearly, the system (\ref{twoBLS1}) is solvable unless the dimension of the span of vectors $A_1x$ and $A_2x$ is the same as the dimension of the span of a vector $A_2x$ for all $x.$ Let us assume that this condition h olds. In particular, this implies that the null space of $A_2$ is contained in the null space  of $A_1.$ 

Let $b_1,\ldots,b_s$ be a basis of the null space of $A_2,$ and let us complete this basis with $b_{s+1},\ldots, b_q$ to a basis for $\mathbb{R}^q.$ Then, there exist scalars $\alpha_k \in \F$ such that:
 $$A_1b_k=\alpha_k A_2b_k, \,k=s+1,\ldots,q,$$ by our assumption. If the $\alpha_k$ are all equal then $A_1=\alpha_{s+1} A_2.$ If not, we may assume that $\alpha_{s+1} \neq \alpha_{s+2}.$ Then $$A_1(b_{s+1}+b_{s+2})=\alpha A_2(b_{s+1}+b_{s+2}).$$ On the other hand $$A_1(b_{s+1}+b_{s+2})=\alpha_{s+1} A_2 b_{s+1}+\alpha_{s+2} A_2 b_{s+2}.$$ Then $(\alpha_{s+1}-\alpha)A_2 b_{s+1}+(\alpha_{s+2}-\alpha)A_2 b_{s+2}=0.$ Since $b_{s+1}$, $b_{s+2}$ and the basis elements  $b_1,\ldots,b_s$ of the null space of $A_2$ are linearly independent
this implies $(\alpha_{s+1}-\alpha)b_{s+1}+(\alpha_{s+2}-\alpha)b_{s+2}=0$, and this contradicts our assumption that $b_{s+1}$ and $b_{s+2}$ are linearly independent.  
\end{proof}

\begin{theorem}\label{ASbound}
Now let $\F$ be either $\R$, $\C$ or a finite field, and let linearly independent $A_1,\ldots, A_m \in M_{p,q}(\F)$ define a bilinear system. If this bilinear system is always solvable, then $m \leq p+q-1$.
\end{theorem}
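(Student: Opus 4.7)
My plan is to translate the hypothesis ``always solvable'' into the surjectivity of a map whose image lies inside the linear image of the rank-one (Segre) cone; the bound $m \le p+q-1$ then follows by a dimension or cardinality count appropriate to each class of field, since this cone is governed by only $p+q-1$ parameters.

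By the reduction of Section \ref{Section3}, the BLS with right-hand side $g$ is solvable if and only if the affine subspace $\{K \in M_{p,q}(\F) : \mc A^T \vc K = g\}$ contains a matrix of rank at most one. Thus ``always solvable'' is equivalent to surjectivity of the map
$$
\psi : R_{\le 1} \to \F^m, \qquad \psi(K) = \mc A^T \vc K,
$$
where $R_{\le 1} := \{yx^T : y \in \F^p,\, x \in \F^q\}$. The set $R_{\le 1}$ admits the parametrization $\F^p \times \F^q \to R_{\le 1}$, $(y,x) \mapsto yx^T$, whose generic fiber $\{(ty, t^{-1}x) : t \neq 0\}$ is one-dimensional --- this is the source of the ``minus one''.

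From here the argument splits by field. Over $\C$, by Chevalley's theorem $\psi(R_{\le 1})$ is constructible of dimension at most $p+q-1$, hence a proper subset of $\C^m$ whenever $m \ge p+q$. Over $\R$, Tarski--Seidenberg gives that $\psi(R_{\le 1})$ is semialgebraic of dimension at most $p+q-1$, again proper in $\R^m$ when $m \ge p+q$. Over a finite field with $|\F|=q_0$, orbit counting yields
$$
|R_{\le 1}| \;=\; \frac{(q_0^p-1)(q_0^q-1)}{q_0-1} + 1,
$$
and one checks directly that this quantity is strictly less than $q_0^{p+q}$, so surjectivity forces $q_0^m = |\psi(R_{\le 1})| \le |R_{\le 1}| < q_0^{p+q}$, i.e.\ $m \le p+q-1$.

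The substantive content is really the single observation that the rank-one variety has only $p+q-1$ degrees of freedom; after that, each of the three field cases is a one-line invocation of a standard dimension or cardinality principle, and no step is meaningfully harder than the others. If anything, the care required is in the finite-field count, to verify that the strict inequality $|R_{\le 1}| < q_0^{p+q}$ holds for every $q_0 \ge 2$ and every $p,q \ge 1$.
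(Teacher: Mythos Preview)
Your argument is correct and follows the same overall strategy as the paper: show that when $m\ge p+q$ the map $(x,y)\mapsto(y^TA_1x,\ldots,y^TA_mx)$ cannot be onto $\F^m$. The finite-field case is identical to the paper's, down to the cardinality estimate.

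Where you diverge is in the handling of $\R$ and $\C$. The paper works differentially: it computes the Jacobian $dF$ explicitly, exhibits the kernel vector $\begin{pmatrix}x\\-y\end{pmatrix}$ to show $\rk(dF)<p+q\le m$ everywhere, and then invokes Sard's Lemma to conclude the image has measure zero (for $\C$ it rewrites $F$ as a real map $\R^{2p+2q}\to\R^{2m}$ and repeats the Jacobian calculation). You instead invoke the dimension theory of constructible and semialgebraic sets: $R_{\le 1}$ is the affine cone over the Segre variety, of dimension $p+q-1$, and Chevalley (resp.\ Tarski--Seidenberg) guarantees the linear image is constructible (resp.\ semialgebraic) of no larger dimension, hence proper in $\F^m$ once $m\ge p+q$. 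Your route is shorter and more conceptual, and it treats $\C$ directly rather than through its underlying real structure; the paper's route is more elementary in the sense that Sard's Lemma plus an explicit kernel computation avoids any appeal to constructibility or model-theoretic quantifier elimination. Both are standard, and neither has a gap.
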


\begin{proof}
Assume  $m \ge p+q$.
Define a degree-two polynomial map on $\F^q\times\F^p$ to $\F^m$ by:
\begin{eqnarray*}
F: &\F^q\times\F^p &\longrightarrow \F^m \\
& (x,y)&\longmapsto (y^TA_1x,\ldots, y^TA_mx) 
\end{eqnarray*}
It suffices to prove that the image of $F$ is strictly contained in $\F^m$. We do so respectively for  the cases $\F=\R$, $\F=\C$ and $|\F|<\infty$.

\begin{case}[$\F=\R$]
Function $F$ is $C^\infty(\R^{p+q})$ smooth. By Sard's Lemma, the subset of $\R^{p+q}$ where $\rk (dF)<m$ has an image of measure 0 in $\R^m$. Therefore, it suffices to show $\rk (dF)<m$ for all pairs $(x,y)$.

Denote $y^TA_ix=y_j A_{ijk}x_k$ ($\sum_{j=1}^p\sum_{k=1}^q$ omitted according to Einstein's notation). It follows that
\[
dF=\pmx{y_j A_{1j1} &\ldots & y_j A_{1jq} & A_{11k}x_k&\ldots&A_{1pk}x_k\\
y_j A_{2j1} &\ldots & y_j A_{2jq} & A_{21k}x_k&\ldots&A_{2pk}x_k\\
\vdots&\vdots&\vdots &\vdots&\vdots&\vdots\\
y_j A_{mj1} &\ldots & y_j A_{mjq} & A_{m1k}x_k&\ldots&A_{mpk}x_k\\}=\pmx{y^TA_1 & x^TA_1^T\\\vdots&\vdots\\y^TA_m & x^TA_m^T}.
\]
Since
\[
dF \pmx{x\\-y}=\pmx{y^TA_1 & x^TA_1^T\\\vdots&\vdots\\y^TA_m & x^TA_m^T}\pmx{x\\-y}=\pmx{y^TA_1x- x^TA_1^Ty\\\vdots&\\ y^TA_mx- x^TA_m^Ty}= \pmx{0\\\vdots&\\ 0},
\]
the columns of matrix $dF$ are linearly dependent. Therefore, $\rk(dF)<m$ holds for any $(x, y)\neq0$.
For the $(x, y)=(0,0)$ case, $dF=(0)$ and $\rk(dF)=0$. Therefore, $F(\F^q\times\F^p)\subset\F^m$.
\end{case}
\begin{case}[$\F=\C$]
Function $F$ can be viewed as a function from $\R^{2p+2q}$ to $\R^{2m}$ (denoted as $\widetilde F$):
\begin{eqnarray*}
\widetilde F: &\R^{2p+2q} &\longrightarrow \R^{2m} \\
& (\textnormal{Re}  (x), \textnormal{Im} (x),\textnormal{Re} (y),\textnormal{Im} (y))&\longmapsto (\textnormal{Re}  (F(x,y)), \textnormal{Im} (F(y,x))) 
\end{eqnarray*}

Since $\widetilde F$ is still a polynomial, it is $C^\infty(\R^{2p+2q})$ smooth. By Sard's Lemma, the subset of $\R^{2p+2q}$ where $\rk (dF)<2m$ has an image of measure 0 in $\R^{2m}$. Therefore, it suffices to show that $\rk (dF)<2m$ for all pairs $(x,y)$. The proof is as follows:

Denote $a:=\textnormal{Re}  (x), b:=\textnormal{Im} (x),c:=\textnormal{Re} (y), d:=\textnormal{Im} (y),B_i:=\textnormal{Re}  (A_i)$, and $C_i:=\textnormal{Im} (A_i)$. By definition
\begin{eqnarray*}
&\widetilde F&(a,b,c,d)=(\textnormal{Re}  (F(x,y)), \textnormal{Im} (F(y,x)))\\
&=& (\textnormal{Re}  ((c+Id)(B_i+IC_i)(a+Ib),\ldots,\textnormal{Im} ((c+Id)(B_i+IC_i)(a+Ib),\ldots)\\
&=& (c^TB_ia-d^TC_ia-c^TC_ib-d^TB_ib,\ldots,-d^TC_ib+d^TB_ia+c^TC_ia+c^TB_ib,\ldots).
\end{eqnarray*}
Therefore,
\begin{eqnarray*}
d\widetilde F& =& \pmx{\displaystyle\pde{\textnormal{Re} (F)}{a}&\displaystyle\pde{\textnormal{Re}  (F)}{b}&\displaystyle\pde{\textnormal{Re}  (F)}{c}&\displaystyle\pde{\textnormal{Re}  (F)}{d}\\ \displaystyle\pde{\textnormal{Im} (F)}{a}&\displaystyle\pde{\textnormal{Im} (F)}{b}&\displaystyle\pde{\textnormal{Im} (F)}{c}&\displaystyle\pde{\textnormal{Im} (F)}{d}}\\
&=& \pmx{c^TB_1-d^TC_1&a^TB_1^T-b^TC_1^T&-c^TC_1-d^TB_1&-a^TC_1^T-b^TB_1^T\\ \vdots&\vdots&\vdots&\vdots\\c^TB_1-d^TC_m&a^TB_m^T-b^TC_m^T&-c^TC_m-d^TB_m&-a^TC_m^T-b^TB_m^T\\ d^TB_1+c^TC_1&a^TC_1^T+b^TB_1^T&-d^TC_1+c^TB_1&-b^TC_1^T+a^TB_1^T\\ \vdots&\vdots&\vdots&\vdots \\d^TB_m+c^TC_m&a^TC_m^T+b^TB_m^T&-d^TC_m+c^TB_m&-b^TC_m^T+a^TB_m^T}.
\end{eqnarray*}
Since
\[
d\widetilde F \pmx{a\\-c\\b\\-d}=\pmx{0\\\vdots\\0},
\]
the columns of matrix $dF$ are linearly dependent. Therefore, for any $(a,b,c,d)\neq0$, $\rk(d \widetilde F)<2m$.
For $(a,b,c,d)=(0,0,0,0)$ case, $d\widetilde F=(0)$, $\rk(d\widetilde F)=0$. Hence $F(\F^q\times\F^p)\subset\F^m$.
\end{case}
\begin{case}[$|\F|=N<\infty$]
It suffices to count the cardinality of $F(\F^q\times\F^p)$ and $\F^m$.  Define relation $(cx,y)\sim(x,cy)$ on ${\F^q}\backslash\{0\}\times{\F^p}\backslash\{0\}$. Notice $F(cx,y)=F(x,cy)$ $(c\in\F^*)$, so that the mapping:
\begin{eqnarray*}
\widetilde F: &{\F^q}\backslash\{0\}\times{\F^p}\backslash\{0\}/\sim &\longrightarrow \F^m \\
& [(x,y)]&\longmapsto F(x,y)
\end{eqnarray*}
is well defined. Also notice
\begin{equation*}
F(\{0\}\times\F^p\cup\F^q\times\{0\})=\{0\},
\end{equation*}
so that we may estimate
\begin{equation*}
\begin{split}
|F(\F^q\times\F^p)|&=
|F({\F^q}\backslash\{0\}\times{\F^p}\backslash\{0\})|+1\\
&\le\frac{(N^q-1)(N^p-1)}{N-1}+1\le(N^q-1)(N^p-1)+1\\
&<N^{p+q}\le N^m=|\F^m|
\end{split}
\end{equation*}
Hence $F(\F^q\times\F^p)\subset\F^m$.
(We assumed $|\F|>1$.)
\end{case}
\end{proof}


When $p=1$ or $q=1,$ the bound in Theorem \ref{ASbound} may be attained. In fact, under the linear independence hypothesis on the matrices $A_i$, every bilinear system is always solvable. 

When both $p,q>1,$ it is not difficult to construct $A_i$'s for which the bilinear system is always solvable whenever $m \leq p+q-2.$ 

\begin{example}
Consider a bilinear system defined by matrices $A_i=E_{1\,i},i=1,\ldots, q-1$ and 
$A_{q+i}=E_{i\, n},$ $i=1,\ldots, p-1.$ ($E_{ij}$ denotes a matrix whose $(i,j)$-th element is equal to $1$ and all other elements are equal to zero.) Then
 $$K(z)=\npmatrix{g_1 & g_2 & \ldots & g_{q-1} & z_1 \\
                               z_2 & z_3 &  \ldots & z_q & g_q \\
                                \vdots      & &\ddots & \vdots &\\
                                 z_{r-q+2}    & & \ldots & z_r& g_{p+q-2}},$$
                                      which may be completed to a rank one matrix, whatever the $g.$ (This construction is not unique.)
\end{example}

When $p,q>1$ and $m=p+q-1,$ we may construct matrices $A_i$ for which the bilinear system is  solvable for "almost" all $g$'s with few exceptions.  
 
 \begin{example}
 A bilinear system defined by matrices $E_{11}, E_{12}, \ldots, E_{1q}$ and matrices 
 $E_{2q}, E_{3q}, \ldots, E_{pq}$ is solvable whenever $g_q \neq 0$ and has no solution when $q_g=0$ and either $g_i=0$ for some $i < q$ or for some $i>q.$ Again looking at the matrix $K(z)$ makes this observation clear.  
 \end{example}
 
 We do not know of an always solvable bilinear system when $p,q >1$ and $m=p+q-1.$ We conjecture that they do not exist. 
 
 In general, then, whether a bilinear system is always solvable for $m \leq p+q-1$ depends upon the data $A_1,\ldots, A_m$, and when $m \leq 2,$ every bilinear system is always solvable. Even when $p$ and $q$ are large and $m=3,$ a bilinear system may not be always solvalbe for "local reasons". For example, in Example \ref{Ex21} we can choose $g_1$, $g_2$, $g_3$ so that the BLS is not solvable, and then embed it in a larger example by adding equations, with many more $x$ and $y$ variables. The resulting BLS will still not have a solution. This also shows that a bilinear system may not be always solvable for any $m,$ $3 \leq m \leq p+q-1.$ 
 
How, then, may the data be tested for always solvability when $m$ is of an appropriate size? We mention several ideas. First, note that a bilinear system becomes a linear system when enough of the variables are taken to have particular values. If, because of the structure of the bilinear system, the result is a linear system with $m$ linearly independent equations, then the obtained linear system has a solution for every right hand side and so does the bilinear system, which is then always solvable. Furthermore, there will be a solution in which some of the variables are constant (independent of $g$). As we will note, it may happen that a bilinear system is always solvable without this happening. 

\begin{example}
If in the bilinear system 
 \begin{align*}
 x_1y_3 + x_2y_2&=g_1 \\
 x_2y_1+x_2y_3&=g_2 \\
 x_2y_2&=g_3
 \end{align*}  
we take $x_2=1$ and $y_3=1,$ the linear system 
\begin{align*}
x_1&=g_1-1 \\
y_1&=g_2-1 \\
y_2&=g_3
\end{align*}
results. Since the latter has the invertible coefficient matrix $I_3,$ it has a solution for all right hand sides and the given bilinear system is always solvable with solution 
 $$x=\npmatrix{g_1-1 \\1} \text{ and }y=\npmatrix{g_2-1 \\g_3 \\ 1}.$$ 
\end{example}

In some cases we can specify variables of one type; we specify only $x$ or only $y$ variables to get a always solvable system.  If all the $y$ ($x$) variables are specified and $Y$ has full row rank ($X$ has full column rank) for some choice of $y$ ($x$), the bilinear system is always solvable. Of course, for this to happen we must have $m \leq p$ ($q$). 

Another way in which a bilinear system is always solvable is based upon the arrangement of dyads $x_iy_j$ that appear or, equivalently, the collective support of the matrices $A_i.$ Let $\mc P,$ $p$-by-$q$ be this collective support. We say that the bilinear system satisfies the \emph{$3$--corner porperty} if $\mc P$ has no $2$-by-$2$ subpattern with $3$ out of $4$ entries nonzero. Note that this happens if, up to permutation equivalence, $\mc P$ is contained in a pattern of the form: (all blank entries are zero)
  $$\npmatrix{*\cdots * & 0 & & & \\
                   & * & & &  \\
                   & \vdots & & &  \\
                   & * & & &  \\
                   & 0 & * \cdots * & 0 & \\
                   & & & *  &\\
                   & & & \vdots & \\
                   & & & * & \\
                   & & & 0 & * \cdots *\\},$$
\emph{a deleted echelon form}, i.e. a pattern in which the nonzero entries in a row do not intersect nonzero entries in a column, unless the intersecting entry is unique nonzero entry in a least one of the row or the column.

\begin{theorem}\label{DEF}
A bilinear system is always solvable if it satisfies the $3$--corner property (or it is equivalent to a bilinear system that satisfies the  $3$--corner property).
\end{theorem}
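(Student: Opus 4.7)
The plan is to reduce the bilinear problem to a simple linear system in the entries of $K=yx^T$ on $\mathcal{P}$, and then use the deleted-echelon structure to realize any solution of that linear system as an honest rank-one matrix.

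First I would restate the hypothesis in its most useful combinatorial form. The deleted-echelon description tells us that every $(r,c)\in\mathcal{P}$ is the unique nonzero position of $\mathcal{P}$ in its row, or the unique nonzero position of $\mathcal{P}$ in its column. Viewing $\mathcal{P}$ as a bipartite graph on rows versus columns with an edge for each nonzero position, this says that every edge is incident to a vertex of degree one, so each connected component is a star: either a single row with several columns attached (a ``row-star'') or a single column with several rows attached (a ``column-star''). This decomposition of $\mathcal{P}$ into vertex-disjoint stars is the whole combinatorial input.

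Next I would replace the bilinear system by a linear system in the scalar unknowns $K_{r,c}$, $(r,c)\in\mathcal{P}$, standing for $y_rx_c$. Because every $A_i$ is supported in $\mathcal{P}$, the equation $y^TA_ix=g_i$ becomes $\sum_{(r,c)\in\mathcal{P}}(A_i)_{r,c}K_{r,c}=g_i$. Linear independence of the $A_i$'s as matrices is the same as linear independence of their restrictions to $\mathcal{P}$, so the $m$-by-$|\mathcal{P}|$ coefficient matrix has full row rank $m$, and the linear system admits a solution $(K_{r,c})_{(r,c)\in\mathcal{P}}$ for every right hand side $g$.

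The crux is to realize such a solution by an actual rank-one matrix. Working one star at a time, which is legitimate because the stars are vertex-disjoint: on a row-star with center row $r_0$ and leaf columns $c_1,\ldots,c_k$, set $y_{r_0}=1$ and $x_{c_\ell}=K_{r_0,c_\ell}$; on a column-star with center column $c_0$ and leaf rows $r_1,\ldots,r_k$, set $x_{c_0}=1$ and $y_{r_\ell}=K_{r_\ell,c_0}$. Entries of $y$ and $x$ indexed by rows or columns not touching $\mathcal{P}$ can be set arbitrarily, say to $0$. Star disjointness prevents any clash, so we obtain $y\in\F^p$, $x\in\F^q$ with $y_rx_c=K_{r,c}$ for every $(r,c)\in\mathcal{P}$, hence $y^TA_ix=g_i$ for all $i$. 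Invariance of solvability under the simultaneous equivalence $A_i\mapsto PA_iQ$ dispatches the ``up to equivalence'' clause.

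The main obstacle is conceptual rather than computational. One must read the hypothesis strongly enough to exclude $2$-by-$2$ subpatterns of $\mathcal{P}$ with all four entries nonzero, i.e.\ to match the deleted-echelon description in the text: otherwise Example \ref{Ex21}, whose collective support is a full $2$-by-$2$ block (satisfying the literal ``no $2$-by-$2$ subpattern with $3$ out of $4$ nonzero'' wording), would already defeat the theorem over $\R$. Under the deleted-echelon reading, no $2$-by-$2$ submatrix of $\mathcal{P}$ is entirely in $\mathcal{P}$ (otherwise some $(r,c)\in\mathcal{P}$ would fail to be unique in its row and in its column), so no rank-one compatibility condition survives across stars and the construction above produces a solution for every $g$.
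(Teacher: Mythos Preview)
Your proof is correct and follows essentially the same approach as the paper's: both recognize that the collective support decomposes into row- and column-centered ``stars,'' fix the central variable in each star to $1$, and then solve a full-row-rank linear system in the remaining (leaf) variables, which are in bijection with the positions of $\mathcal{P}$. Your bipartite-graph language makes the star decomposition explicit, and your observation that the hypothesis must be read via the deleted-echelon description (so that full $2$-by-$2$ blocks in $\mathcal{P}$ are excluded, else Example~\ref{Ex21} would contradict the theorem) is a useful clarification that the paper's proof tacitly relies on.
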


\begin{proof}
Let a bilinear system satisfy the $3$--corner property and let $P$ a $(0-1)$--matrix whose support is equal to the collective support of the bilinear system. Then $y^TPx$ is a sum of the 
terms of the form:
$$y_i(\sum_{j=k_i}^{l_i} x_j) \text{ and } x_j(\sum_{i=s_j}^{t_j} y_i),$$
where each $y_i$ and each $x_j$ appear at most once. If $y_i$ appears in the term of the form $y_i(\sum_{j=k_i}^{l_i} x_j)$, we specify $y_i=1$ and if $x_j$ appears in the term of the form $x_j(\sum_{i=s_j}^{t_j} y_i)$, we specify $x_j=1$. After this specification, each matrix in a bilinear system gives a linear equation in the remaining unspecified variables. Since the matrices are linearly independent, a nonsingular linear system results for the remaining variables. 

Recall that equivalence doesn't affect the solvability of a bilinear system, so that the condition can be weakened to a system being only equivalent to a bilinear system that satisfies the $3$--corner property.
\end{proof}

Note that the number of positions in a deleted echelon form is at most $p+q-2$ if $p,q >1,$ and it is actually $p+q$ less the minimum number of lines that cover it, if there are no zero lines. In particular, if $p=1$($q=1$), it will be $p+q-1,$ but this is the only case it can be that high. 

Note that a bilinear system may be always solvable without any of the above sufficient conditions occurring. 

\begin{example}
Return to the Example \ref{EX41}. In this case the $3$--corner property does not hold as the support is 
 $$\npmatrix{* & * & * \\ * & 0 & *},$$
 while rank $X \leq 2$ and 
  $$Y=\npmatrix{0 & y_1& y_2 \\ -y_1 & 0  &-y_1 \\ -y_2 & y_1& 0}$$ 
  is identically singular. For the bilinear forms
   \begin{align*}
   x_1y_2+x_2y_3 \\
   x_1y_1+x_1y_3\\
   x_1y_2-x_2y_1,
   \end{align*}
   no specification of just one variable gives a linear system and no specialization of two variables that gives a linear system gives an invertible one. However, the analysis of Section 4 shows that there is a solution for every $g.$ 
\end{example}

\bibliographystyle{plain}
\bibliography{Bilinear}

\end{document}